\newcommand{\R}{\mathbb R}
  \newcommand{\E}{\mathbb E}
\newcommand{\PP}{\mathbb P}
\newcommand{\calF} {\ensuremath {\mathcal{F}}}
\newcommand{\dive}{{\rm div}}
\newtheorem{theorem}{Theorem}[section]
 \newtheorem{remark}[theorem]{Remark}
\newtheorem{lemma}[theorem]{Lemma}
\newtheorem{definition}[theorem]{Definition}
\newtheorem{hypothesis}[theorem]{Hypothesis}
\begin{document}

\title{  Regularization by noise in   (2x 2) hyperbolic systems of conservation law. }

\author{
Christian Olivera\footnote{Departamento de Matem\'{a}tica, Universidade Estadual de Campinas, Brazil.
E-mail:  {\sl  colivera@ime.unicamp.br}.
}}

\date{}

\maketitle

\textit{Key words and phrases.
Stochastic partial differential equation, Continuity  equation, Hyperbolic Systems, Entropy solution ,  Regularization by noise.}

\vspace{0.3cm} \noindent {\bf MSC2010 subject classification:} 60H15, 
 35R60, 
 35L02, 
 60H30, 35L40. 


%
\begin{abstract}
 In this paper we study a non strictly systems of conservation law
by stochastic perturbation. We  show the existence and uniqueness of the solution. We do  not assume that $BV$-regularity 
for the initial conditions. The proofs  are based on the concept of entropy solution and in the characteristics method (in the influence of noise). This is the first result on the regularization by noise in hyperbolic systems of conservation law. 
 \end{abstract}
%
\maketitle

%

\section {Introduction} \label{Intro}

A large number of problems in physics and engineering are modeled by systems of conservation
laws

\begin{equation}\label{trasports}
    \partial_t u(t, x) + div ( f(u(t,x)) ) = 0 \, ,
\end{equation}

\noindent  here $u=u(t,x)$ is called the conserved quantity, while $F$ is the flux. Examples for hyperbolic systems of conservation laws include
the shallow water equations of oceanography, the Euler equations of gas dynamics, the
magnetohydrodynamics (MHD) equations of plasma physics, the equations of nonlinear
elastodynamics and the Einstein equations of general relativity. 
When smooth initial data are considered, it is well known that the solution
can develop shocks within finite time. Therefore, global solutions can only
be constructed within a space of discontinuous functions. Moreover,  when discontinuities are present, weak solutions may not be unique. 
 A central issue is to regain uniqueness by imposing appropriate selection criteria. 
The  well-posedness theorems within the class of entropy solutions, for the scalar case,  were
established by Kruzkov(see \cite{Krus}). It is well known that the main techniques of abstract functional analysis do not apply to hyperbolic systems. Solutions cannot be represented as fixed points of continuous transformations, or in
variational form, as critical points of suitable functionals.  For the above reasons, the theory of hyperbolic conservation laws has largely developed by ad hoc methods. We refer to \cite{Bress}, \cite{Bian}
\cite{Dafermos} and \cite{Serre}. The well-posedness general system of conservation laws has been established only for initial data with sufficiently small total variation, see  \cite{Bress} and the
references therein.

 We consider the following systems of conservation law

\begin{equation}\label{deter}
 \left \{
\begin{aligned}
    &\partial_t v(t, x) + Div  \big( F(v) \big )= 0 \,  \\
    \\[5pt]
		&\partial_t u(t, x) +  Div (v u )  = 0 \, .
		\end{aligned}
\right .
\end{equation}

We point that in  the $L^{1}\cap L^{\infty}$ setting this systems ill-posedness since  the classical  DiPerna-Lions-Ambrossio theory of uniqueness   of distributional solutions  for transport/ continuity equation  does not apply when the drift has   $L^{1}\cap L^{2}$ regularity, see  \cite{ambrisio} and  \cite{DL}.  Also see   \cite{ambrisio2}  and  \cite{lellis}  for new developments in the theory.
We point under strong assumption on the coefficients and  initial conditions P. Le Floch  in  \cite{Lef2} solved this problem using Volpert multiplication of distributions.
\medskip
In contrast with its deterministic counterpart, the singular stochastic continuity/transport equation with multiplicative noise is well-posed.  
The addition of a stochastic noise is often used to account for
numerical, empirical or physical uncertainties. In \cite{AttFl11,Beck,Fre1,Fre2,FGP2,MNP14,Moli}, well-posedness and regularization by linear multiplicative noise for continuity/transport equations have been obtained. We refer to \cite{Moli} for more details on the literature.

 In this paper we study the influence of the noise in the hyperbolic systems  (\ref{deter}). More precisely,  we consider  following  stochastic  systems of conservation law

\begin{equation}\label{trasport}
 \left \{
\begin{aligned}
    &\partial_t v(t, x) + Div  \big( ( F(v(t,x))  \big )= 0 \, ,
    \\[5pt]
		    &\partial_t u(t, x) + Div  \big( (v(t,x) + \frac{d B_{t}}{dt}) \cdot  u(t, x)  \big )= 0 \, ,
    \\[5pt]
    &v|_{t=0}=  v_{0}, \  u_{t=0}=u_{0}\  \, .
\end{aligned}
\right .
\end{equation}
Here, $(t,x) \in [0,T] \times \R$, $\omega \in \Omega$ is an element of the probability space $(\Omega, \PP, \calF)$ and $B_{t}$ is a standard Brownian motion in $\mathbb{R}$. The stochastic integration is to be understood in the Stratonovich sense.
The Stratonovich form is the natural one for several reasons , including physical intuition related to the Wong-Zakai principle.  

\medskip

The main issue of this paper is to prove  existence and uniqueness of  entropy-weak solutions for  the  stochastic systems
of the conservation law (\ref{trasport}).  We do not assume  $BV$- regularity for the initial conditions.
We use  the   entropy formulation of conservation law and we  employ the  stochastic characteristics in order to obtain  a unique  solution to the  one-dimensional stochastic equation with a bounded measurable drift coefficient.
 We adapted  the ideas in \cite{Moli} and \cite{Ol} in our context where the drift term in the continuity equation depend on time and it is bounded and integrable. 

Throughout of this paper, we fix a stochastic basis with a $d$-dimensional Brownian motion $\big( \Omega, \mathcal{F}, \{\mathcal{F}_t: t \in [0,T] \}, \mathbb{P}, (B_{t}) \big)$.

\subsection{One Example}
 We consider the systems

\begin{equation}\label{deter3}
 \left \{
\begin{aligned}
    &\partial_t v(t, x) + Div  \big( \frac{1}{2} v^{2}(t,x)  \big )= 0 \, ,
    \\[5pt]
		    &\partial_t u(t, x) + Div  \big( ( v(t,x)   u(t, x)  \big )= 0 \, ,
    \\[5pt]
    &v|_{t=0}=  v_{0}, \  u_{t=0}=u_{0}\  \, ,
\end{aligned}
\right .
\end{equation}

here $v$ is the velocity and $u$  the density of the particles.
This system has applications in cosmology, the  model
describes the evolution of matter in the last stage of the expansion of the
universe as cold dust moving under gravity alone and the laws are governed
by the system (\ref{deter3}). Clearly the eigenvalues  are equal  $\lambda_1 = \lambda_2 = v$. Thus the  system (\ref{deter3}) 
is not  strictly hyperbolic.  The first equation of  (\ref{deter3})-Burgers equation is
known to  develop  singularities in finite time even if the initial data 
$v_{0}$ is smooth, and it is not at all obvious to solve the second equation. One question that remains is a well-posedness theory and large time behaviour of solution. In \cite{Jose} and \cite{Tan} the authors  proved existence of weak solutions via $\delta-$ shock for Riemann initial condition.  Another approach of nonconservative product can be found in the 
work of J.F. Colombeau \cite{Colom} .

\subsection{Scalar case.}
We point that recently there has been an interest in studying the effect of stochastic forcing on
nonlinear conservation laws driven by space-time white noise, see  \cite{chen2,Debu,Feng,Hof}. For other hand, in \cite{lions} and \cite{lions2}  the authors  introduced the theory of  pathwise  solutions to study the stochastic conservation law  
driven by  continuous noise.

\subsection{Possible extensions.} We point that our approach can be apply to other class of non-coupled systems like

\begin{equation}
 \left \{
\begin{aligned}
    &\partial_t v(t, x) =P(v)  \,  \\
    \\[5pt]
		&\partial_t u(t, x) +  Div (f(v) u )  = 0 \, .
		\end{aligned}
\right .
\end{equation}

where the $P$ is some differential operator.  For instance  for the  systems 
of the Hamilton-Jacobi and the continuity equations, see \cite{ST}. 
The problem of coupled systems is much more complicated and we shall consider it in future investigations.

\subsection{Hypothesis}

We assume the following conditions 
\begin{hypothesis}\label{hyp1}
The  flux  $F$ satisfies
\begin{equation}\label{cond3-1}
 F\in C^{1} 
\end{equation}
and the initial condition holds
\begin{equation}\label{weight}
  v_{0}\in L^{\infty}(\R)\cap L^{1}(\R), u_0 \in L^{2}(\R)\cap L^{1}(\R). 
\end{equation}
\end{hypothesis}


\section{Existence}

\subsection{Definition of solutions}

\begin{definition} Let $\eta\in C^{1}(\R)$ be a convex function. If there exist $q\in C^{1}(\R)$ such that  for all $v$
\[
\eta^{\prime}(v) F^{\prime}(v)= q^{\prime}(v)
\] 
then $\eta,q$ is called  an entropy-entropy flux pair of the conservation law 

\[
\partial_t v(t, x) + Div  \big( f(v) \big )=0, \ v(t,0)=v_{0}(x).
\]

\end{definition}

\begin{definition}\label{defisoluH}
The  stochastic process $v \in L^{\infty}([0,T]\times \R)\cap L^{\infty}([0,T],L^{1}(\R))$ and 
$u\in L^\infty([0,T], L^{2}(\Omega \times \R )) \cap L^1([0,T] \times \Omega \times \R ) $ are called a  entropy  weak solution of  the stochastic hyperbolic systems  \eqref{trasport} when:

\begin{itemize}

\item $v$ is entropy solution of the conservation law 

\[
\partial_t v(t, x) + Div  \big( F(v) \big )=0, \ v(t,0)=v_{0}(x).
\]

That is, if for every entropy flux pair $\eta, q$  we have

\[
\partial_t \eta(v) + Div (q(v))\leq 0
\]

 in the sense of distribution. 
\item For any $\varphi \in C_0^{\infty}(\R)$, the real valued process $\int  u(t,x)\varphi(x)  dx$ has a continuous modification which is an $\mathcal{F}_{t}$-semimartingale, and for all $t \in [0,T]$, we have $\mathbb{P}$-almost surely
\begin{equation} \label{DISTINTSTR}
\begin{aligned}
    \int_{\R} u(t,x) \varphi(x) dx = &\int_{\R} u_{0}(x) \varphi(x) \ dx
	+ \int_{0}^{t} \!\! \int_{\R}   u(s,x)   \, v(t,x) \partial_x \varphi(x) \ dx ds
\\[5pt]
    & + \int_{0}^{t} \!\! \int_{\R}   u(s,x) \ \partial_x \varphi(x) \ dx \, {\circ}{dB_s} \, .
\end{aligned}
\end{equation}
\end{itemize}
\end{definition}

\begin{remark}\label{lemmaito}
Using the same idea as in Lemma 13 \cite{FGP2}, one can write the problem (\ref{DISTINTSTR}) in It\^o form as follows, a  stochastic process $u\in   L^\infty([0,T], L^{2}(\Omega \times \R ) ) \cap L^1([0,T] \times \Omega \times \R ) $  is solution  of the SPDE (\ref{DISTINTSTR}) iff for every test function $\varphi \in C_{0}^{\infty}(\mathbb{R})$, the process $\int u(t, x)\varphi(x) dx$ has a continuous modification which is a $\mathcal{F}_{t}$-semimartingale and satisfies the following It\^o's formulation

\[
\begin{aligned}
    \int_{\R} u(t,x) \varphi(x) dx = &\int_{\R} u_{0}(x) \varphi(x) \ dx
	+ \int_{0}^{t} \!\! \int_{\R}   u(s,x)   \, v(t,x) \partial_x \varphi(x) \ dx ds
\\[5pt]
    & + \int_{0}^{t} \!\! \int_{\R}   u(s,x) \ \partial_x \varphi(x) \ dx \, dB_s \,  + \frac{1}{2} \int_{0}^{t} \!\! \int_{\R}   u(s,x) \ \partial_x^{2} \varphi(x) \ dx \, ds.
\end{aligned}
\]

\end{remark}

\subsection{Existence.}

We shall  prove existence of  solutions  under hypothesis \ref{hyp1}.
\begin{lemma}
Assume that hypothesis \ref{hyp1} holds. Then there exists entropy-weak solution of the hyperbolic systems  \eqref{trasport}.
\end{lemma}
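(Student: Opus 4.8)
The plan is to construct the solution in two decoupled stages, following the structure of the system \eqref{trasport}: first resolve the scalar conservation law for $v$, then treat the linear stochastic continuity equation for $u$ with $v$ as a known, fixed coefficient. For the first component, Hypothesis \ref{hyp1} gives $F \in C^1$ and $v_0 \in L^\infty(\R) \cap L^1(\R)$, so by the classical Kru\v{z}kov theory (see \cite{Krus}) there exists a unique entropy solution $v \in L^\infty([0,T] \times \R) \cap L^\infty([0,T], L^1(\R))$ of $\partial_t v + \mathrm{Div}(F(v)) = 0$ with $v|_{t=0} = v_0$; the $L^\infty$ bound is the maximum principle and the $L^1$ bound follows from the contraction property. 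This fixes once and for all the drift coefficient $v(t,x)$ appearing in the second equation, and in particular we may regard $b(t,x) := v(t,x)$ as a bounded, measurable, and $L^1$-in-space drift that is deterministic (i.e.\ $\mathcal{F}_0$-measurable, hence adapted).

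For the second stage, I would build $u$ via stochastic characteristics. Consider the stochastic flow $X_{s,t}(x)$ solving the SDE $dX_{s,t}(x) = v(t, X_{s,t}(x))\,dt + \circ\,dB_t$, $X_{s,s}(x) = x$. Since $v$ is only bounded and measurable in $x$, one cannot invoke the classical Kunita theory directly; instead I would regularize $v$ by mollification $v^\varepsilon$, obtain smooth flows $X^\varepsilon$, and pass to the limit, using the Zvonkin-type / It\^o--Tanaka arguments (as adapted in \cite{Moli}, \cite{Ol}) that exploit the regularizing effect of the noise to get a well-defined stochastic flow of homeomorphisms with a.e.\ Jacobian $J_{s,t}(x)$ even for the rough drift. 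Because the drift here is bounded but only $L^1 \cap L^2$ in space, one should check that the $L^2$-integrability of $u_0$ is propagated; this is where the weighted / change-of-variables estimate using $J_{s,t}$ together with the $L^\infty$ bound on $v$ enters. Then set $u(t,x)$ to be the push-forward of $u_0$ under the flow, i.e.\ $u(t,\cdot)$ defined by duality $\int u(t,x)\varphi(x)\,dx = \int u_0(x)\,\varphi(X_{0,t}(x))\,dx$, or equivalently $u(t,X_{0,t}(x))\,J_{0,t}(x) = u_0(x)$; an It\^o--Wentzell / Kunita formula applied to $\varphi(X_{0,t}(x))$ then yields exactly the Stratonovich identity \eqref{DISTINTSTR}, and Remark \ref{lemmaito} converts it to the It\^o form with the $\tfrac12 \partial_x^2\varphi$ corrector.

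The remaining point is to verify the claimed function-space memberships: $u \in L^\infty([0,T], L^2(\Omega \times \R)) \cap L^1([0,T] \times \Omega \times \R)$. The $L^1$ bound is immediate from the push-forward representation and conservation of mass (testing against $\varphi \equiv 1$ formally, made rigorous by cutoff), giving $\|u(t,\cdot)\|_{L^1} \le \|u_0\|_{L^1}$ pathwise. For the $L^2(\Omega \times \R)$ bound, I would use the change of variables $y = X_{0,t}(x)$, write $\E \int u(t,y)^2\,dy = \E \int u_0(x)^2\, J_{0,t}(x)^{-1}\,dx$ after the substitution, and control $\E[J_{0,t}^{-1}]$ (or rather the relevant moment) using the bound on $\|v\|_{L^\infty}$ and the explicit structure of the additive noise; the key estimate, borrowed from \cite{Moli}, is that the inverse Jacobian has exponential moments depending only on $\|\mathrm{div}\, v\|$-type quantities, which here must be handled via the Itô--Tanaka trick since $v$ is not differentiable. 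I expect \emph{this Jacobian moment estimate for a merely bounded, $L^1\cap L^2$ drift} to be the main obstacle: the regularization-by-noise machinery of \cite{Moli}, \cite{Ol} is designed for bounded drifts, and one must carefully track that no spatial derivative of $v$ is actually needed — only the regularizing second-order term from the noise — so that the limit $\varepsilon \to 0$ can be taken with uniform $L^2(\Omega\times\R)$ bounds, yielding a genuine entropy-weak solution in the sense of Definition \ref{defisoluH}. $\qquad\blacksquare$
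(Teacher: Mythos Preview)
Your two-stage outline (Kru\v{z}kov for $v$, then linear stochastic continuity for $u$ with $v$ as a fixed bounded drift, regularize, use characteristics, control the Jacobian, pass to the limit) matches the paper's architecture, and you correctly single out the moment bound on the inverse Jacobian as the crux. Where you diverge is in \emph{how} that Jacobian bound is obtained. Instead of the Zvonkin-type or generic regularization-by-noise machinery you invoke from \cite{Moli,Ol}, the paper uses a direct, self-contained trick that exploits the one-dimensional structure and the conservation law itself: it applies the ordinary It\^o formula to the \emph{primitive} $\bar v_\varepsilon(t,x)=\int_0^x v_\varepsilon(t,z)\,dz$ along the regularized flow $X^\varepsilon$. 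Since $\partial_{xx}\bar v_\varepsilon=\partial_x v_\varepsilon$, the It\^o correction produces precisely $\tfrac12\int_0^t(\partial_x v_\varepsilon)(s,X_s^\varepsilon)\,ds$, i.e.\ half the log-Jacobian. The time derivative $\partial_t\bar v_\varepsilon$ is controlled via the weak form of the conservation law (it equals $(F(v))_\varepsilon$, bounded by $\|F(v)\|_{L^\infty}$), the first-order spatial term gives $v_\varepsilon^2$ and a stochastic integral, and $\bar v_\varepsilon$ itself is bounded by $\|v\|_{L^\infty([0,T],L^1)}$. Rearranging, the log-Jacobian is a bounded quantity plus a stochastic integral, so $|\partial_x X_t^\varepsilon|^{-1}$ is bounded by a constant times an exponential martingale of mean one, giving $\E|\partial_x X_t^\varepsilon|^{-1}\le C$ uniformly in $\varepsilon$ with no black boxes.

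A second difference: the paper never constructs the flow for the rough drift $v$. It stays at the $\varepsilon$-level throughout, derives the uniform $L^2(\Omega\times\R)$ bound on $u^\varepsilon$ from the Jacobian estimate above, extracts a weak limit $u$ by compactness, and passes to the limit in the It\^o-form weak formulation as in \cite[Theorem 15]{FGP2}. Your plan to build $X_{s,t}$ directly for the non-smooth $v$ and then verify \eqref{DISTINTSTR} via It\^o--Wentzell is more ambitious: it would require Sobolev differentiability and invertibility of the flow for a merely bounded, time-dependent drift, which is delicate and unnecessary here. The weak-compactness route sidesteps this entirely and is what makes the existence argument short.
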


\begin{proof}

{\it Step 1: Conservation law .} According to the classical theory of conservation law, see for instance \cite{Dafermos}, 
 we have that there exists a unique entropy solution of the conservation law

\[
\partial_t v(t, x) + Div  \big( F(v) \big )=0, \ v(t,0)=v_{0}(x).
\]

If the the initial condition $v_{0}\in L^{1}(\R)\cap L^{\infty}(\R)$ then the solution
 $v \in L^{\infty}([0,T]\times \R)\cap L^{\infty}([0,T],L^{1}(\R))$. 
\bigskip

{\it Step 2: Primitive of $v$.} It easy to see that for any test function 
$\varphi \in C_{0}^{\infty}(\mathbb{R})$  we have

\[
\begin{aligned}
    \int_{\R} v(t,x) \varphi(x) dx = &\int_{\R} v_{0}(x) \varphi(x) \ dx
	+ \int_{0}^{t} \!\! \int_{\R}   F(v(s,x))  \partial_x \varphi(x) \ dx ds.
\end{aligned}
\]

since  any entropy solution is also a weak solution. 

Let $\{\rho_\varepsilon\}_\varepsilon$ be a family of standard symmetric mollifiers. Then we obtain

\[
\begin{aligned}
    \int_{\R} v(t,y) \rho_\varepsilon(x-y) dy = &\int_{\R} v_{0}(y) \rho_\varepsilon(x-y) dy 
	+ \int_{0}^{t} \!\! \int_{\R}   F(v(s,y))  \partial_y \rho_\varepsilon(x-y) dy ds.
\end{aligned}
\]

Integrating  we get

\[
\begin{aligned}
    \int_{0}^{z} v_\varepsilon(t,x) dx = &\int_{0}^{z} v_{0}^\varepsilon(x) dz 
	+ \int_{0}^{t}    (F(v)\ast \rho_\varepsilon)(z)  ds.
\end{aligned}
\]

We denoted $\int_{0}^{z} v_\varepsilon(t,x) dx=\bar{v}_\varepsilon(t,x)$.

\bigskip

{\it Step 3: Regularization.} 
We  define the family of  regularized coefficients given by 
$$
v^{\epsilon}(t,.) = (v(t,x) \ast_{x} \rho_\varepsilon)(t,.).
$$

Clearly we observe that, for every  $\varepsilon>0$, any element $v^{\varepsilon}$, $u_0^\varepsilon$ are smooth (in space) and  with bounded derivatives of all orders.  We observe that to study the stochastic continuity equation (SCE) (\ref{DISTINTSTR}) is equivalent to study the stochastic transport equation given by (regularized version):
\begin{equation}\label{STE-reg}
 \left \{
\begin{aligned}
    &d u^\varepsilon (t, x) +  \nabla u^\varepsilon (t, x)  \cdot \big( v^\varepsilon (t,x)  dt +
 \circ d B_{t} \big) +\dive b^{\varepsilon}(x) \,u^\varepsilon (t,x) dt = 0\, ,
    \\[5pt]
    &u^\varepsilon \big|_{t=0}=  u_{0}^\varepsilon
\end{aligned}
\right .
\end{equation}
Following the classical theory of H. Kunita \cite[Theorem 6.1.9]{Ku} we obtain that

\[
u^{\varepsilon}(t,x) =  u_{0}^{\varepsilon} (\psi_t^{\varepsilon}(t,x))  J\psi_t^{\varepsilon}(t,x),
\]
is the unique solution to the regularized equation \eqref{STE-reg}, where $\phi_t^{\varepsilon}$ is the flow associated  to the following stochastic differential equation (SDE):
\begin{equation*}
d X_t = v^\varepsilon (t,X_t) \, dt + d B_t \, ,  \hspace{1cm}   X_0 = x \,,
\end{equation*}
and $\psi_t^{\varepsilon}$ is the inverse of $\phi_t^{\varepsilon}$.

\bigskip

{\it Step 4: It\^o Formula .} Applying the It\^o formula to  $\bar{v}_\varepsilon(t,X_t^{\epsilon})$ we deduce 

\[
    \bar{v}_\varepsilon(t,X_t^{\epsilon})=   \int_{0}^{X_t^{\epsilon}} u_{0}^{\epsilon}(x) dx 
	+ \int_{0}^{t}    (F(v)\ast \rho_\varepsilon)(s,X_s^{\epsilon})  ds +  \int_{0}^{t}    v_\varepsilon^{2}(s,X_s^{\epsilon}) ds 
\]

\[
   +  \int_{0}^{t}    v_\varepsilon(s,X_s^{\epsilon}) dB_{s}
	+ \frac{1}{2} \int_{0}^{t}  (\partial_{x}v_\varepsilon\big)(s,X_s^{\epsilon}) ds 
\]

{\it Step 5:  Boundeness.}  We observe that

\[
\|\bar{v}_\varepsilon(t,X_t^{\epsilon})\|_{L^{\infty}(\Omega\times [0,T]\times \R)}
\leq \| v\|_{L^{\infty}( [0,T],L^1(\R))},
\]

\[
\|\int_{0}^{X_t^{\epsilon}} v_{0}^{\epsilon}(x) dx\|_{L^{\infty}(\Omega\times [0,T]\times \R)}
\leq \| v_{0}\|_{L^{1}( \R)},
\]

\[
\|\int_{0}^{t}    (F(v)\ast \rho_\varepsilon)(s,X_s^{\epsilon})  ds\|_{L^{\infty}(\Omega\times [0,T]\times \R)}
  \leq  C \| F(v)\|_{L^{\infty}},
\]

\[
\| \int_{0}^{t}    v_\varepsilon^{2}(s,X_s^{\epsilon}) ds\|_{L^{\infty}(\Omega\times [0,T]\times \R)}
  \leq  C \| v\|_{L^{2}([0,T], L^{\infty}(\R))}^{2}.
\]

{\it Step 6 : Estimation on Jacobain.}

\noindent We denote

\begin{align*}
\mathcal{E}\bigg(\int_0^t v_{\epsilon}(s,X_s)dB_s\bigg)=
\exp\bigg\{\int_0^t v_{\epsilon}(s,X_s^{\epsilon})dB_s-\frac{1}{2} \int_0^{t} v_{\epsilon}^{2}(s,X_s^{\epsilon})ds \bigg\},
\end{align*}

We note that $\partial_x X_{t}$ satisfies

\[
\partial_x X_{t}=\exp\bigg\{ \int_{0}^{t} (\partial_x v_{\epsilon})(s,X_{s}) \  ds  \bigg\}. 
\]

From steps 4-5 we have

\[
\E | \partial_x X_{t}|^{-1}\leq C \E \mathcal{E}\bigg(\int_0^t v_{\epsilon}(s,X_s)dB_s\bigg).
\]

We observe  that the  processes  $ \mathcal{E}\bigg(\int_0^t v_{\epsilon}(s,X_s)dB_s\bigg)$,
   is martingale with expectation equal to one.  The we conclude that

	\[
\E | \partial_x X_{t}|^{-1}\leq C .
\]

{\it Step 7: Passing to the limit .}

Making the change of variables $y=\psi_t^{\varepsilon}(x)$ we have that

\begin{align*}
\int_{\R} \E[|u^{\varepsilon}(t,x)|^2]\,  dx & =   \int_{\R} |u_{0}^{\varepsilon} (y)|^2  \E| J\phi_t^{\varepsilon}|^{-1}    dy.
\end{align*}

From  step  6 we have 

\begin{equation}\label{eq0}
\int_{\R} \E[|u^{\varepsilon}(t,x)|^2]\,  dx \leq C .
\end{equation}

Therefore, the sequence $\{u^{\varepsilon}\}_{\varepsilon>0}$ is bounded in $L^\infty([0,T], L^{2}(\Omega \times \R ) ) \cap L^1([0,T] \times \Omega \times \R )$. Then  there exists a convergent subsequence, which we denote also by $u^{\varepsilon}$, such that converge weakly in $L^\infty([0,T], L^{2}(\Omega \times \R ) )$ to some process $u\in L^\infty([0,T], L^{2}(\Omega \times \R )) \cap L^1([0,T] \times \Omega \times \R )  $.

Now, if $u^{\varepsilon}$ is a solution of \eqref{STE-reg}, it is also a weak solution, that is, for any test function $\varphi\in C_0^{\infty}(\R)$, $u^{\varepsilon}$ satisfies (written in the Itô form):
\begin{align*}
\int_{\R} u^{\varepsilon}(t,x) \varphi(x) dx = &\int_{\R} u^{\varepsilon}_{0}(x) \varphi(x) \ dx + \int_{0}^{t} \!\! \int_{\R}   u^{\varepsilon}(s,x)   \,  v^{\varepsilon}(s,x) \partial_x \varphi(x) \ dx ds \\
    & + \int_{0}^{t} \!\! \int_{\R}   u^{\varepsilon}(s,x) \ \partial_x \varphi(x) \ dx \, dB_s \,  + \frac{1}{2} \int_{0}^{t} \!\! \int_{\R}   u^{\varepsilon}(s,x) \ \partial_x^{2} \varphi(x) \ dx \, ds\,.
\end{align*}
Thus, for prove existence of the SCE \eqref{trasport} is enough to pass to the limit in the above equation along the convergent subsequence found. This is made through of the same arguments of \cite[theorem 15]{FGP2}.
\medskip

\end{proof}


\section{Uniqueness.}

In this section, we shall present a uniqueness theorem
for the SPDE (\ref{trasport})
\begin{theorem}\label{uni2}
Under the conditions of hypothesis \ref{hyp1}, uniqueness holds for  entropy -weak solutions of the hyperbolic  problem \eqref{trasport}.
\end{theorem}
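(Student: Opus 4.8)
The argument decouples naturally. Since $F\in C^{1}$ and $v_{0}\in L^{1}(\R)\cap L^{\infty}(\R)$, Hypothesis~\ref{hyp1} puts the first equation in the classical Kruzhkov framework, so the scalar conservation law $\partial_t v+\dive(F(v))=0$ with datum $v_{0}$ admits a unique entropy solution in $L^{\infty}([0,T]\times\R)\cap L^{\infty}([0,T],L^{1}(\R))$ (see \cite{Krus,Dafermos}). Consequently any two entropy--weak solutions of \eqref{trasport} share the same first component $v$, and it remains to prove uniqueness of the second component for the \emph{linear} stochastic continuity equation \eqref{DISTINTSTR} in which $v$ is now a fixed, bounded and measurable coefficient. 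By linearity it suffices to show that a solution $u\in L^{\infty}([0,T],L^{2}(\Omega\times\R))\cap L^{1}([0,T]\times\Omega\times\R)$ with $u_{0}=0$ must vanish identically.

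For the $u$--equation the plan is to run the characteristics construction of the existence proof in order to \emph{represent} an arbitrary weak solution. Consider the SDE $dX_t=v(t,X_t)\,dt+dB_t$, $X_0=x$; although $v$ is only bounded and measurable, the regularization-by-noise results used in the Lemma (in the spirit of \cite{Moli,Ol,FGP2}) guarantee a unique strong solution generating a stochastic flow $\phi_t$ of Sobolev diffeomorphisms, with inverse $\psi_t$, whose Jacobians $J\phi_t$ and $(J\phi_t)^{-1}$ have moments bounded uniformly in $x$ (as in Steps~6--7 of the Lemma). The flow formula $u(t,x)=u_{0}(\psi_t(x))\,J\psi_t(x)$ is then the only admissible candidate, and it is identically zero when $u_{0}=0$; so the whole task reduces to showing that \emph{every} weak $L^{2}$--solution of \eqref{DISTINTSTR} coincides with this representation.

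To do this I would mollify in space, $u^{\varepsilon}=u\ast_x\rho_\varepsilon$, as in the Lemma. Then $u^{\varepsilon}$ satisfies the It\^o equation of Remark~\ref{lemmaito} with the extra DiPerna--Lions commutator source $R^{\varepsilon}=\dive\big((v\,u)\ast\rho_\varepsilon-v\,u^{\varepsilon}\big)$. Composing $u^{\varepsilon}$ with the forward characteristic flow and applying the It\^o--Wentzell (It\^o--Kunita) formula to $u^{\varepsilon}(t,\phi_t(x))\,J\phi_t(x)$, the transport drift generated by $\phi_t$ cancels the continuity drift, and --- this being exactly why the noise is written in Stratonovich form --- the two stochastic integrals cancel pathwise, so one is left with
\[
u^{\varepsilon}(t,\phi_t(x))\,J\phi_t(x)=u_{0}^{\varepsilon}(x)-\int_0^t R^{\varepsilon}\big(s,\phi_s(x)\big)\,J\phi_s(x)\,ds .
\]
Letting $\varepsilon\to0$, the change of variables $y=\psi_t(x)$ together with the bounds on $J\phi_t$ makes the left side converge, in a suitable weak sense, to $u(t,\phi_t(\cdot))\,J\phi_t(\cdot)$, while $u_{0}^{\varepsilon}\to0$; granting that the commutator term does not survive the limit, one obtains $u(t,\phi_t(x))\,J\phi_t(x)=0$ for a.e.\ $x$, hence $u(t,\cdot)\equiv0$ for all $t\in[0,T]$, $\PP$--almost surely, which is the claim.

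The main obstacle is precisely the commutator contribution. In the deterministic DiPerna--Lions--Ambrosio theory one needs $BV$ (or Sobolev) regularity of the drift to get $R^{\varepsilon}\to0$, which is not available here --- indeed \eqref{deter} is ill-posed at this regularity, as recalled after \eqref{deter}, since $v$ is only $L^{1}\cap L^{\infty}$ in $x$. The gain has to come from the Brownian characteristics: after the It\^o--Tanaka/Zvonkin transformation that trades the rough drift $v$ for a $W^{1,p}$ one at the price of a regular change of variables, the commutator composed with the flow is controlled by the Sobolev norm of $\phi_t$, which is finite even though $\partial_x v$ is not a function, and the a priori bounds of Steps~5--7 (notably the martingale property of $\mathcal{E}\big(\int_0^{\cdot}v_\varepsilon(s,X_s)\,dB_s\big)$ and $\E\,|\partial_x X_t|^{\pm1}\le C$) are what close the estimate. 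This is the step that the arguments of \cite{Moli,Ol} are tailored to supply, and it is where this proof concentrates its work; the remaining passages to the limit are routine.
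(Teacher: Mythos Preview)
Your reduction (Kruzhkov uniqueness for $v$, then linearity so $u_{0}=0\Rightarrow u\equiv0$, then mollify and run It\^o--Wentzell along stochastic characteristics) matches the paper's scaffolding, but the paper handles the one hard step---the commutator---by a different and much more elementary device that you do not use. Instead of working with $u$ directly, the paper passes to the spatial primitive $V(t,x)=\int_{-\infty}^{x}u(t,y)\,dy$, which turns the \emph{continuity} equation for $u$ into a \emph{transport} equation for $V$ with the same drift $v$ and Stratonovich noise. After mollification the commutator is then $\mathcal{R}_\varepsilon(V,v)=v_\varepsilon\,\partial_xV_\varepsilon-(v\,\partial_xV)_\varepsilon=v_\varepsilon\,u_\varepsilon-(vu)_\varepsilon$, which has \emph{no} derivative falling on $v$ and tends to $0$ in $L^{2}([0,T]\times\R)$ by dominated convergence alone, since $v\in L^{\infty}$ and $u\in L^{2}$. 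The paper applies It\^o--Wentzell to $V_\varepsilon(t,X_t^\varepsilon)$ along the \emph{regularized} characteristics $dX_t^\varepsilon=v^\varepsilon(t,X_t^\varepsilon)\,dt+dB_t$, uses the Jacobian bound $\E|JX^\varepsilon_{t-s}|^2\le C$ from the existence proof, and a single H\"older inequality to kill the commutator term; then $V\equiv0$, hence $u\equiv0$.

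By contrast, your commutator $R^\varepsilon=\partial_x\big((vu)\ast\rho_\varepsilon-v\,u^\varepsilon\big)$ carries a spatial divergence and, with $v$ merely $L^\infty$ in $x$, does not converge by soft arguments; this is exactly the DiPerna--Lions obstruction you flag. You then invoke a Zvonkin change of variables and the Sobolev regularity of the rough flow to close it, essentially outsourcing the crux to \cite{Moli,Ol,FGP2}. That route may well be made to work in $d=1$, but note that (i) \cite{FGP2} treats the transport, not the continuity, equation, so the extra zero--order $\partial_xv\cdot u$ piece is precisely what is at stake; and (ii) the one--dimensional results you cite are themselves built on the primitive trick, so the citation is close to circular. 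The paper's point is that in one space dimension the primitive reduces the continuity equation to a transport equation, after which \emph{no} regularization--by--noise commutator technology is needed for uniqueness; only the Jacobian moment bound (your Step~6) survives from the stochastic side.
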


\begin{proof}

{\it Step 1: Set of solutions.}  The uniqueness of the conservation law

\[
\partial_t v(t, x) + Div  \big( F(v) \big )=0, \ v(t,0)=v_{0}(x).
\]

follows from  the classical theory of entropy solutions. 

\bigskip

{\it Step 2: } We remark that the set of   solutions of  equation (\ref{DISTINTSTR}) 
is a linear subspace of $ L^{\infty}([0,T]\times R, L^{2}(\Omega)) \cap L^1([0,T] \times \Omega \times \R )$, because the stochastic continuity equation is linear, and the integrability conditions is a linear constraint. Therefore, it is enough to show that a  $u$ with initial condition $u_0= 0$ vanishes identically.

\bigskip

{\it Step 1:  Primitive of the solution.} We define $V(t,x)=\int_{-\infty}^{x} u(t,y) \ dy $. We consider a  nonnegative smooth cut-off function $\eta$ supported on the ball of radius 2 and such that  $\eta=1$ on the ball of radius 1. For any $R>0$, we introduce the rescaled functions $\eta_R (\cdot) =  \eta(\frac{.}{R})$.
Let be $\varphi\in C_0^{\infty}(\R)$, we have 

\[
 \int_{\R} V(t,x) \varphi(x) \eta_R (x)  dx = - \int_{\R} u(t,x)   \theta(x) \eta_R (x)  dx
-\int_{\R} V(t,x)   \theta(x) \partial_x \eta_R (x)  dx\,,
\]

\noindent where $\theta(x) =\int_{-\infty}^{x}  \varphi(y)   \ dy$. By  definition of the solution $u$, taking as test function $ \theta(x) \eta_R (x)$ we deduce that 

\begin{align}\label{DISTINTSTRTR}
    \int_{\R} & V(t,x) \ \eta_R (x) \varphi(x) dx = - \int_{0}^{t} \!\! \int_{\R}   \partial_x V(s,x)   \,  v(s,x) \eta_R (x) \varphi(x) \ dx ds \nonumber\\[5pt]
    & - \int_{0}^{t} \!\! \int_{\R}   \partial_x V(s,x) \  \eta_R (x) \varphi(x) \ dx \, {\circ}{dB_s} - \int_{0}^{t} \!\! \int_{\R}   \partial_x V(s,x)   \, v(s,x) \partial_x \eta_R (x) \theta(x)  \ dx ds\nonumber\\[5pt]
    &- \int_{0}^{t} \!\! \int_{\R}   \partial_x V(s,x) \  \partial_x \eta_R (x) \theta(x)  \ dx \, {\circ}{dB_s}-\int_{\R} V(t,x)   \theta(x) \partial_x \eta_R (x)  dx.
\end{align}

Since $V\in L^{\infty}([0,T],L^{1}(\R) )$ taking the limit as $R\rightarrow \infty$ we get 

\begin{equation} \label{DISTINTSTRT}
\begin{aligned}
    \int_{\R} V(t,x) \varphi(x) dx = \\[5pt]
	- \int_{0}^{t} \!\! \int_{\R}   \partial_x V(s,x)   \, v(s,x)  \varphi(x) \ dx ds
- \int_{0}^{t} \!\! \int_{\R}   \partial_x V(s,x) \   \varphi(x) \ dx \, {\circ}{dB_s} .
\end{aligned}
\end{equation}

\bigskip

{\it Step 2: Smoothing.}
Let $\{\rho_{\varepsilon}(x)\}_\varepsilon$ be a family of standard symmetric mollifiers. For any $\varepsilon>0$ and $x\in\R^d$ we use $\rho_\varepsilon(x-\cdot)$ as test function and we  obtain 
$$
\begin{aligned}
      \int_{\R} V(t,y) \rho_\varepsilon(x-y) \, dy  = &\, - \int_{0}^{t}  \int_{\R} \big( v(s,y)  \partial_y V(s,y)  \big)  \rho_\varepsilon(x-y) \ dy ds
		\\[5pt]
    & -  \int_{0}^{t} \!\! \int_{\R} \partial_y V(s,y) \, \rho_\varepsilon(x-y)  \, dy \circ dB_s
\end{aligned}
$$

We put   $V_\varepsilon(t,x)= (V\ast \rho_\varepsilon)(x)$, $v_\varepsilon(t,x)= (v \ast \rho_\varepsilon)(t,x)$ and
$(vV)_\varepsilon(t,x)= (v.V\ast \rho_\varepsilon)(x)$. Then have

\[
    V_{\varepsilon}(t,x) + \int_{0}^{t} v\varepsilon(s,x)  \partial_x V_{\varepsilon}(s,x) \,  ds   +  \int_{0}^{t}   \partial_{x}  V_{\varepsilon}(s,x) \, \circ dB_s
	\]
\[			
  =\int_{0}^{t} \big(\mathcal{R}_{\epsilon}(V,v) \big) (x,s) \,  ds ,
	\]

\noindent where 
$ \mathcal{R}_{\epsilon}(V,v)  = v_\varepsilon \ \partial_x V_\varepsilon  -  (v\partial_x V)_\varepsilon  $.

{\it Step 3: Method of Characteristic.}

We consider the stochastic flow

\[
d X_t^{\epsilon} = v^\varepsilon (t,X_t^{\epsilon}) \, dt + d B_t \, ,  \hspace{1cm}   X_0 = x \,.
\]

Using the same arguments that in steps 3-5-6  of the existence proof we have

\begin{equation}\label{est3}
\E |JX_{t-s}^{\epsilon}|^{2}\leq C.
\end{equation}

Applying the It\^o-Wentzell-Kunita formula   to $ V_{\varepsilon}(t,X_{t}^{\epsilon})$
, see Theorem 8.3 of \cite{Ku2}, we have

\[
   V_{\varepsilon}(t,X_{t}^{\epsilon})  = \int_{0}^{t} \big(\mathcal{R}_{\epsilon}(V,v) \big) (X_s^{\epsilon},s)  ds  .
\]

Hence 

\[
   V_{\varepsilon}(t,x)  =\int_{0}^{t} \big(\mathcal{R}_{\epsilon}(V,v) \big) (X_{t-s}^{-1,\epsilon},s)  ds  .
\]

Multiplying by the test functions $\varphi$ and integrating in $\R$ we obtain

\begin{equation}
   \int V_{\varepsilon}(t,x) \ \varphi(x) dx  =
	\int_{0}^{t}  \int \big(\mathcal{R}_{\epsilon}(V,v) \big) (X_{t-s}^{-1,\epsilon},s) \  \  \varphi(x) \ \, dx \   ds  .
\end{equation}

 \noindent Doing the change of variable we obtain

\begin{equation}
	\int_{0}^{t}  \int \big(\mathcal{R}_{\epsilon}(V,v) \big) (X_{t-s}^{-1,\epsilon},s) \ \varphi(x) \ \, dx \   ds =
	\int_{0}^{t}  \int \big(\mathcal{R}_{\epsilon}(V,v) \big) (x,s) \   JX_{t-s}^{\epsilon}  \varphi(X_{t-s}^{\epsilon}) \ \, dx \   ds .
\end{equation}

\bigskip

{\it Step 4: Convergence of the commutator.}   Now, we observe that $\mathcal{R}_{\epsilon}(V,b)$ converge to zero in
$L^{2}([0,T]\times \R )$. In fact, we have

\[
  (v \ \partial_x V)_{\varepsilon} \rightarrow v \ \partial_x V \ in \ L^{2}([0,T]\times \R ),
\]

and by the dominated convergence theorem we obtain

\[
v_{\epsilon}   \partial_x V_{\varepsilon} \rightarrow v \ \partial_x V \ in \ L^{2}([0,T]\times \R ).
\]

\bigskip

{\it Step 5: Conclusion.}  From step 3  we have 

\begin{equation}\label{conv}
   \int V_{\varepsilon}(t,x) \ \varphi(x) dx  =
		\int_{0}^{t}  \int \big(\mathcal{R}_{\epsilon}(V,v) \big) (x,s) \   JX_{t-s}^{\epsilon}  \varphi(X_{t-s}^{\epsilon}) \ \, dx \   ds ,
\end{equation}

 Using  H\"older's inequality we obtain 

\[
	\E \bigg|\int_{0}^{t}  \int \bigg(\mathcal{R}_{\epsilon}(V,v) \bigg) (x,s) \   JX_{t-s}^{\epsilon}  \varphi(X_{t-s}^{\epsilon}) \ \, dx \   ds \bigg|
	\]
	
	\[
	\leq   	 \bigg(\E \int_{0}^{t}  \int |\big(\mathcal{R}_{\epsilon}(V,v) \big) (x,s)|^{2} \  \, dx \   ds \bigg)^{\frac{1}{2}}
  \bigg(\E \int_{0}^{t}  \int  | JX_{t-s}^{\epsilon} \varphi(X_{t-s}^{\epsilon})|^{2} \ \, dx \ ds \bigg)^{\frac{1}{2}}
\]

\noindent  From step 4 we deduce

\[
 \bigg(\E \int_{0}^{t}  \int |\big(\mathcal{R}_{\epsilon}(V,v) \big) (x,s)|^{2} \  \, dx \   ds \bigg)^{\frac{1}{2}}\rightarrow 0.
\]

From estimation  (\ref{est3}) we obtain 

\[
\bigg(\E \int_{0}^{t}  \int  | JX_{t-s}^{\epsilon} \varphi(X_{t-s}^{\epsilon})|^{2} \ \, dx \ ds \bigg)^{\frac{1}{2}}
\]
\[
\leq C \bigg(\int_{0}^{t}  \int_{\R}  |\varphi(x)|^{2} \ \, dx \ ds \bigg)^{\frac{1}{2}}\leq C \ \int_{\R}   |\varphi(x)|^{2} \ \, dx.
\]

Passing to the limit in equation (\ref{conv}) we conclude that $V=0$. Then we deduce that  $u=0$.

\end{proof}

\section*{Acknowledgements}
    Christian Olivera C. O. is partially supported by  CNPq
through the grant 460713/2014-0 and FAPESP by the grants 2015/04723-2 and 2015/07278-0.



\begin{thebibliography}{9999}



\bibitem{ambrisio}
L.  Ambrosio, (2004).{\it Transport equation and
Cauchy problem for $BV$ vector fields}, Invent. Math.,  158,  227-260.


\bibitem{ambrisio2}
L. Ambrosio  G. Crippa, (2014). {\it Continuity equations and ODE
fows with non-smooth velocity},  Lecture Notes of a course given at HeriottWatt University, Edinburgh. Proceeding
of the Royal Society of Edinburgh, Section A: Mathematics, 144, 1191-1244.


\bibitem{AttFl11} S. Attanasio and F. Flandoli.(2011) {\it Renormalized Solutions for Stochastic Transport Equations and the Regularization by Bilinear Multiplicative Noise}. Comm. in Partial Differential Equations, 36(8), 1455--1474.

\bibitem{Beck}
L. Beck, F. Flandoli, M. Gubinelli and M. Maurelli,  (2014) {\it
Stochastic ODEs and stochastic linear PDEs with critical drift: regularity, duality and uniqueness }. Preprint available on Arxiv: 1401-1530, .

\bibitem{Bian}
 S. Bianchini and A. Bressan, (2005) {\it  Vanishing viscosity solutions
  of nonlinear hyperbolic systems}, Annals of Mathematics, 161 , 223-342.




\bibitem{Bress}
A. Bressan, G. Crasta and B. Piccoli, 2000 {\it  Well-posedness of the Cauchy problem for $2\times 2$},
systems of conservation laws. Memoirs of the AMS, 146.

\bibitem{chen2}
 G.-Q. Chen, Q. Ding, and K. H. Karlsen,(2012). {\it On nonlinear stochastic balance laws}, Arch. Ration. Mech. Anal., 204,707-743,

\bibitem{Debu}
A. Debussche, J. Vovelle,(2010). {\it Scalar conservation laws with stochastic forcing}, J. Funct. Anal. 259, 1014-
1042.



\bibitem{Dafermos}
	C.M. Dafermos, (2010) {\it Hyperbolic conservation laws in continuum physics}. Third edition.
	Grundlehren der Mathematischen Wissenschaften [Fundamental Principles of Mathematical Sciences], 325. Springer-Verlag.



\bibitem{lellis} C. De Lellis, (2007){\it Ordinary differential
equations with rough coefficients and the renormalization theorem of
Ambrosio,} Bourbaki Seminar, Preprint,  1-26.




\bibitem{DL}
R. DiPerna and P.L. Lions, (1989){\it Ordinary differential
equations, transport theory and Sobolev spaces}. Invent. Math., 98,
511-547.


\bibitem{Fre1}
 E. Fedrizzi and F. Flandoli, (2013) {\it Noise prevents singularities in linear transport equations}.
 Journal of Functional Analysis, 264, 1329-1354.


\bibitem {Fre2}E. Fedrizzi, W. Neves, C. Olivera, (2014) {\it On a class of
stochastic transport equations for $L_{loc}^{2}$ vector fields},  to appears in
 the Annali della Scuola Normale Superiore di Pisa, Classe di Scienze., arXiv:1410.6631v2.
   .

\bibitem{Feng}
J. Feng, D. Nualart(2008). Stochastic scalar conservation laws, J. Funct. Anal. 255, 313-373.




\bibitem{FGP2}
 F. Flandoli, M. Gubinelli and E. Priola, (2010) {\it Well-posedness of the transport equation by stochastic
 perturbation}. Invent. Math., 180, 1-53.

\bibitem{Hof}
M. Hofmanova(2016). {\it  Scalar conservation laws with rough flux and stochastic forcing}, Stoch. PDE: Anal. Comp. 4
 635-690.



\bibitem{Jose} K. T. Joseph,(1993)  {\it A Rieman problem whose viscosity solutions contain  $\delta$-measures}
, Asymptotic Analysis, 105-120.

\bibitem{Krus}
 S. Kruzhkov(1970). {\it  First-order quasilinear equations with several space variables}, Mat. Sb. 123, 228-255. English transl. in Math. USSR Sb. 10 (1970), 217-273.



\bibitem{Ku}
 H. Kunita, (1990) {\it Stochastic flows and stochastic differential
equations}, Cambridge University Press.



\bibitem{Ku2}
H. Kunita, (1982) {\it  Stochastic differential equations and stochastic
flows of diffeomorphisms}, Lectures Notes in Mathematics,
Springer-Verlag, Berlin, 1097, 143-303.


\bibitem{Ku3}
H. Kunita,(1984) {\it First order stochastic partial differential equations}.  In: Stochastic Analysis,
Katata Kyoto, North-Holland Math. Library, 32, 249-269.


\bibitem{Lef2} P. Le Floch 
\textit{ An Existence and Uniqueness Result for Two Nonstrictly Hyperbolic Systems},
Nonlinear Evolution Equations That Change Type, 126-138, 1990.

\bibitem{lions}
 P.L. Lions, P.  Benoit and P.E. Souganidis(2013).   {\it Scalar conservation laws with rough (stochastic) fluxes  }. 
 Stochastic Partial Differential Equations: Analysis and Computations, 1 (4), 664-686. 





\bibitem{lions2}
 P.L. Lions, P.  Benoit and P.E. Souganidis(2014).   
{\it Scalar conservation laws with rough (stochastic) fluxes: the spatially dependent case},
Stochastic Partial Differential Equations: Analysis and Computations,  2,  517-538.




\bibitem{MNP14} S.A. Mohammed, T.K. Nilssen, and F.N. Proske,(2015)
{\it Sobolev Differentiable Stochastic Flows for SDE's with Singular Coefficients: Applications to the Transport Equation}, Annals of Probability,  43, 1535-1576.


\bibitem{Moli}
 David A.C. Mollinedo and C. Olivera.(2017)  {\it Stochastic continuity  equation with non-smooth velocity}, to appears
Annali di Matematica Pura ed Applicata , Doi : 10.1007/s10231-017-0633-8	. 




\bibitem{Ol}
  C. Olivera(2017). {\it  Regularization by noise in one-dimensional continuity equation},   arXiv:1702.05971.



\bibitem{Serre}
 D. Serre, (1999) {\it Systems of conservation laws 1-2}. Cambridge U. Press.

\bibitem{ST}
T Stromberg, (2007) {\it  Well-posedness for the system of the Hamilton-Jacobi and the continuity equations
}, Journal of Evolution Equations, 7,  2007. 

\bibitem{Tan}
Dechun Tan, Tong Zhang and Yuxi Zheng, (1994)
{\it  Delta-shock waves as limits of vanishing viscosity
for hyperbolic systems of conservation laws}, Journal of Differential Equations,
112.

\end{thebibliography}
\end{document}